\documentclass[onenumber,plainnumbering,english,11 pt]{amsart}
\usepackage[utf8]{inputenc}
\usepackage[square,numbers]{natbib}
\usepackage[mathscr]{eucal}
\usepackage{amssymb,amsmath,amscd,mathrsfs,paralist,amsthm}
\usepackage{mathrsfs}
\usepackage{bm}
\usepackage{scalerel}
\usepackage{frcursive}
\usepackage[T1]{fontenc}
\usepackage{mathdots}% http://ctan.org/pkg/mathdots
\usepackage{MnSymbol}
\usepackage{tikz}
\usepackage{listings}
\usetikzlibrary{shapes.geometric, arrows}                      
\usepackage{tikz}
\usepackage{scalerel,stackengine}
\usetikzlibrary{matrix}
\usetikzlibrary{cd}
\usepackage[colorlinks,linkcolor=black,urlcolor=black]{hyperref}
\hypersetup{
     colorlinks   = true,
     citecolor    = black}
\usepackage{graphicx}
\usepackage{bbm}
\usepackage[all,cmtip]{xy}
\theoremstyle{plain}
\newtheorem{theorem}{Theorem}[section]

\newtheorem{lemma}[theorem]{Lemma}

\theoremstyle{definition}
\newtheorem{definition}[theorem]{Definition}

\newtheorem{example}[theorem]{Example}

\newtheorem{remark}[theorem]{Remark}

\usepackage{bm}

\newcommand{\D}{{\mathscr H}}

\numberwithin{equation}{section}
\newcommand{\N}{\mathbb N}

\newcommand{\C}{\mathbb C}

\renewcommand{\k}{{\mathbf k}}

\newcommand{\theoref}[1]{Theorem~\ref{#1}}

\newcommand{\lemref}[1]{Lemma~\ref{#1}}

\newcommand{\exref}[1]{Example~\ref{#1}}

\newcommand{\secref}[1]{Section~\ref{#1}}
\usepackage[all]{xy}
\usepackage{lipsum}  
\makeatletter
\@namedef{subjclassname@2020}{\textup{2020} Mathematics Subject Classification}
\makeatother
\usepackage{microtype}

  \newcommand{\la }{\langle}
\newcommand{\ra }{\rangle}

\newcommand{ \cf }{\emph{cf.} }

\begin{document}
\title{Cross-ratios and first order ordinary differential equations}
\keywords{Generalized Riccati-Abel equations, Cross-ratio, Schur functions, Algebraic solutions}
\subjclass[2020]{12H05, 34M15}
\author{Oumar Wone}
\address{Oumar Wone\\Département de Mathématiques Université Cheikh Anta Diop (UCAD), Avenue Cheikh Anta Diop, Dakar Sénégal.}
    \email{omzowone@gmail.com}   
 \begin{abstract}
We investigate the usefulness of the cross-ratio in the study of first order odes of the form $y^\prime=P_ny^n+P_{n-1}y^{n-1}+\ldots+P_1y+P_0,\;\; (P_i)_{0\leqslant i\leqslant n}\in K,\;n\geqslant1$, where $K$ is a differential field of characteristic zero with algebraically closed field of constants. We discover along the way many differential algebraic identity between solutions to that equation. We also explain why the cross-ratio of any four solutions to such an equation is in general not constant when $n\geq3$ by exhibiting an obstruction which prevents its constancy when $n\geq3$. Finally to any separable polynomial $P_\varphi(z)=z^{n+1}+\varphi_1z^n+\varphi_2z^{n-1}+\ldots+\varphi_{n+1}$, $n\geq2$, over a differential field of meromorphic function on some domain of $\C$ with the usual derivation we associate a uniquely defined first order ode of the form $y^\prime=P_ny^n+P_{n-1}y^{n-1}+\ldots+P_1y+P_0,\;\; (P_i)_{0\leqslant i\leqslant n}\in K$ such that every root of $P_\varphi(z)$ is a solution to the said differential equation.
\end{abstract} 
\maketitle
\tableofcontents
\section{Introduction}
We learn in the old treatises, cf. for instance \cite{hille1976, ince1944}, that the cross-ratio of any four distinct solutions to a Riccati equation is a constant. This property is related to the theory of Vessiot-Guldberg-Lie algebras (Lie superposition) or what is the same in modern parlance to the question of internality to the constants of first order odes $y^\prime=f(y)$ where $f(y)\in K[y]$ and $(K,\delta)$ is a differential field of characteristic zero with algebraically closed field of constants $C_K=\ker \delta$, say. In fact the Riccati equation $y^\prime=P_2y^2+P_1y+P_0$ and its sister equations $y^\prime=Q_1y+Q_0$, $y^\prime=R_0$ are the only first order ODEs with polynomial right hand sides which admit a Lie superposition formula aka are internal to the constants, see \cite[§11.2]{ibragimov1999} for the classical proof or \cite{srinivasan2026, nagloo} for a modern algebraic proof. Above the $P_i$, $Q_j$, $R_k$ belong to $K$. Since the Riccati equation is the only first order $y^\prime=f(y)$ where $f(y)\in K[y]$ having such a property it is hopeless to search for a similar property for first order odes
$$y^\prime=P_ny^n+P_{n-1}y^{n-1}+\ldots+P_1y+P_0,\quad (P_i)_{0\leqslant i\leqslant n}\in K,\;n\geq3,\;P_n\not=0.$$
But rather than internality to the constants is it possible to exhibit differential algebraic relations between solutions to that last differential equation in some differential field extension which could possibly play the role of internality. One hint in that direction is given by considering the logarithmic derivative of the cross-ratio of any four distinct solutions to a Riccati differential equation with $P_2\not=0$. Then if we denote still by $\delta$ the extension of the derivation on $K$ to a derivation on $K(y_1,y_2,y_3,y_4)$ obtained by setting $y_i^\prime=P_2y_i^2+P_1y_i+P_0$ for $i=1,2,3,4$, we see that the constancy of solutions to a given Riccati equation is equivalent to the vanishing of the logarithmic derivative with respect to $\delta$ given by
$$d\log\left(\dfrac{(y_3-y_1)(y_4-y_2)}{(y_3-y_2)(y_4-y_1)}\right)=0.$$
It is through this route, i.e. by considering logarithmic derivatives of differences of solutions and their differences that we find differential algebraic relations for differential equations
$$y^\prime=P_ny^n+P_{n-1}y^{n-1}+\ldots+P_1y+P_0,\quad (P_i)_{0\leqslant i\leqslant n}\in K,\;n\geq3,\;P_n\not=0$$
which could play the role of the constancy of the cross-ratio for the Riccati equation, \cf. \theoref{curious}. One the other hand what exactly is preventing a generic given differential equation 
$$y^\prime=P_ny^n+P_{n-1}y^{n-1}+\ldots+P_1y+P_0,\quad (P_i)_{0\leqslant i\leqslant n}\in K,\;n\geq3,\;P_n\not=0$$
from having a constant cross-ratio. We find in \theoref{obstruction} the exact obstruction which prevents such an occurrence. This obstruction is expressed as a $K$-linear combination of certain determinants on the solutions which arise via the use of Schur functions. For the theory of Schur functions we refer to \cite[p. 40]{macdonald1995}.

Besides classically since the time of Schwarz \cite{schwarz1872} and Liouville \cite{liouville1835}, given a differential equation over a differential field $(K,\delta)$ one seeks whether or not it admits solutions of a given type, for instance one looks for solutions algebraic over $K$ or Liouvillian over $K$ etc, see also \cite{katz1972} and \cite{singer1992} for modern undertakings of such problem. But what about inverting that type of problems? For instance for a given separable polynomial $P_a=z^{n+1}+a_1(x)z^n+a_2(x)z^{n-1}+\ldots+a_{n+1}(x)$ of degree $n+1$, $n\geq2$ over a differential field $(K,\frac{d}{dx})$ of meromorphic functions on a domain, and such that $\ker\frac{d}{dx}=\C$, is it possible to find a differential equation of the 
$$y^\prime=P_ny^n+P_{n-1}y^{n-1}+\ldots+P_1y+P_0,\quad (P_i)_{0\leqslant i\leqslant n}\in K$$
such that all the roots of the polynomial $P_a$ are at the same time solutions to the differential equation? Formulated as such the problem looks like some kind of Riemann-Hilbert problem. We show that given such a separable polynomial $P$ there is exactly one unique such differential equation
$$y^\prime=P_ny^n+P_{n-1}y^{n-1}+\ldots+P_1y+P_0,\quad (P_i)_{0\leqslant i\leqslant n}\in K.$$
We refer to \theoref{brm}. Our approach to this question is akin but distinct to the one taken in the theory of GKZ hypergeometric functions \cite{sturmfels2000}, i.e. we consider the roots of $P_a$ as being functions depending on the coefficients of $P_a$ and on $x$, see \cite[§2]{sturmfels2000} for a similar reasoning.

The plan of this article is as follows in \secref{differential} after some preliminaries to fix the notations and terminology we explain how to get \theoref{curious} starting from first principles. Then in \secref{obstructionto} after indicating and justifying how Schur functions arise in our context we give the proof of \theoref{obstruction}. Finally in \secref{algebraic} we first of all explain how to get our key identity \eqref{pol27} in \lemref{lemrefAE} and after that we deduce fairly quickly the \theoref{brm}. Following that we give at the very end two examples \exref{onetoone} and \exref{onetotwo} motivating the theory presented in the \secref{algebraic}.
\section{Some differential algebraic relations over the field of constants between solutions of a generalized Riccati-Abel equation}
\label{differential}
We begin with some preliminaries which shall be needed in many parts of this article. A differential field $K$ is a field $K$ endowed with a derivation that is an abelian group homomorphism $\delta:K\to K$ which also satisfies the Leibniz rule. A finitely generated differential field extension $F$ of $K$ is a differential field obtained from $K$ by considering the quotient field of the ring formed from $K$ by the adjonction of a finite number of differential indeterminates, meaning that there are some elements $w_1,w_2,\ldots,w_m$ of $F$ such that $F$ is the polynomial ring over $K$ with an infinite number of variables given by the $w_i$ and all their derivatives, see \cite{ritt1950, kaplansky1957}. Such a differential field extension is in general denoted $K\la w_1,w_2,\ldots,w_m\ra$. The underlying ring of a finitely generated differential field is called a finitely generated differential ring. An element of a finitely generated differential ring is called a differential polynomial.  Among the finitely generated differential rings over $K$, in case $m=1$ we have what is called the ring of ordinary differential polynomials denoted $K\{y\}$. Let $(K,\delta=:^\prime)$ be a differential field of characteristic zero with algebraically closed field of constants $\ker\delta$. A universal differential field extension $\mathscr U$ \cite[§5, p. 769]{kolchin1953} of $K$ is differential field extension such that for any finitely generated differential field extension $F\subseteq\mathscr U$ of $K$ and given any finitely generated differential field extension $F_1$ of $F$, which is not necessarily contained in $\mathscr U$, there is a differential $F$-embedding of $F_1$ into $\mathscr U$, that is a differential embedding $\sigma$ (commuting with derivations) with $\sigma(a)=a$ for all $a\in F$. Every differential field $K$ of characteristic zero has a universal extension $\mathscr U$ by \cite[§5]{kolchin1953}. Furthermore $\mathscr U$ is algebraically closed, and the field of constants $C_{\mathscr U}$ of $\mathscr U$ is an algebraically closed extension of the field of constants $C_K$ of $K$ of infinite degree of transcendence, see \cite[§5]{kolchin1953}.

Let $K\{y\}$ be the ring of ordinary differential polynomials over $K$. Let $\Sigma$ be any system of differential polynomials in $K\{y\}$. A zero $\eta$ of $\Sigma$ is an element of differential field extension of $K$ which annuls every differential polynomial in $\Sigma$, \cite[p. 21]{ritt1950}. By definition of $\mathscr U$ such an $\eta$ can be considered as belonging to $\mathscr U$ since there is a differential embedding over $K$ of $K\la\eta\ra$ into $\mathscr U$. More generally $\mathscr U$ "contains" every solution to any differential polynomial $P\in K\{y\}\setminus K$ in any differential field extension of $K$.

Let $P\in K\{y\}\setminus K$ be a differential polynomial. Then the order of $P$, denoted $\rm{ord}(P)$ is the highest derivative of $y$ effectively present in $P$. Then the separant $S_P$ of $P$ is by definition $S_P:=\frac{\partial P}{\partial y^{(\rm{ord}(P))}}$. A differential polynomial $P\in K\{y\}\setminus K$ is called irreducible when $P$ is irreducible as a polynomial in $K\left[y,y^{\prime}, y^{\prime\prime},\ldots,y^{(\rm{ord}(P))}\right]$. Let $P\in K\{y\}\setminus K$ be an irreducible differential polynomial. Then one defines \cite[p. 26, §6]{ritt1950} the differential ideal (stable under derivation)
$$\Sigma_P:=\{A\in K\{y\}|S_PA\equiv0\mod\{P\}\}$$
where $\{P\}$ is the radical (perfect) differential ideal generated by $P$, see \cite[p. 11]{kaplansky1957}. One shows, cf. \cite[lem. 7.9]{kaplansky1957}, \cite{aroca}, \cite[p. 30]{ritt1950}, that $\Sigma_P$ is a prime differential ideal if $P$ is irreducible. Let now $\Sigma$ be any prime differential ideal of $K\{y\}$ distinct from the unit ideal. Then a zero $\eta$ of $\Sigma$ is called generic if for any differential polynomial $P$, $P(\eta)=0$ implies that $P\in\Sigma$, cf. \cite[p. 26-27, p. 129-131]{ritt1950}, \cite[prop.1, p. 146]{kolchin1973}. Furthermore every prime differential ideal $\Sigma$ of $K\{y\}$ which is a non-unit prime differential ideal has a generic zero by \cite[p. 27]{ritt1950}. Using this we may define, see \cite{ritt1950}, for a given irreducible differentiable polynomial $P\in K\{y\}\setminus K$ a general or generic solution $\eta$ of $P$ as a generic solution $\eta$ of $\Sigma_P$. Moreover by \cite[§9, p. 138-139]{kolchin1973} every other solution $\xi$ to $\Sigma_P$ arises from $\eta$ via differential specialization, i.e. from the consideration of a morphism of differential rings $f:K\{\eta\}\to K\la\xi\ra$ with $f(\eta)=\xi$.

Let $(K,\delta=:^\prime)$ be a differential field of characteristic zero with algebraically field of constant $\rm C_K=\ker(\delta)$. In this section we aim to generalize in various ways known properties of solutions to the Riccati equation to differential equations of the form
\begin{equation}
\label{pol0}
y^\prime=P_ny^n+P_{n-1}y^{n-1}+\ldots+P_1y+P_0,\quad (P_i)_{0\leqslant i\leqslant n}\in K,\;n\geqslant1.
\end{equation}
Before beginning the study we recall what a Picard-Vessiot extension of a differential field is since we shall need that along the way.
\begin{definition}
Let $(K,\delta)$ be a differential field of characteristic zero and algebraically closed field of constants $C_K$. A Picard-Vessiot extension of $K$ is a differential field extension with derivation still denoted $\delta$ such that 
\begin{enumerate}
\item $\delta:\mathscr K\to\mathscr K$ has kernel $\ker\delta=C_K$
\item $\mathscr K=K\langle\omega\rangle$ where $(\omega_1,\ldots,\omega_n)$ is a fundamental system of solutions of a linear homogeneous differential equation
\begin{equation}
\label{tolly13}
\mathscr L(y)=y^{(n)}+a_1y^{(n-1)}+\ldots+a_0y=0:\end{equation}
$\omega_1$, $\omega_2$, $\ldots$, $\omega_n$ are $n$ solutions to \eqref{tolly13} linearly independent over $C_K$. Furthermore by definition $K\langle\omega\rangle=K(\omega_1,\ldots,\omega_n,\ldots,\omega_1^{(n-1)},\ldots,\omega_n^{(n-1)})$. 
\end{enumerate}
See \cite[p. 21-22]{kaplansky1957}, \cite[§1.3, p. 12-13]{singer2003}.
\end{definition}

\subsection{Case $n=1$}
Given the differential equation
\begin{equation}
\label{pol00}
y^\prime=P_1y+P_{0},\quad (P_i)_{0\leqslant i\leqslant 1}\in K.
\end{equation}
Let $y$ be the general solution and $y_j$ be a particular solution to \eqref{pol00} in a differential field extension of $K$. Then
\begin{equation}
\label{pol01}
y^\prime-y^\prime_i=P_1(y-y_j).
\end{equation}
So if $y_1$ and $y_2$ are any distinct particular solutions distinct to \eqref{pol00} we obtain 
\begin{equation}
\label{pol02}
\begin{split}
y^\prime-y^\prime_1=P_1(y-y_1).\\
y^\prime-y^\prime_2=P_1(y-y_2).
\end{split}
\end{equation}
Hence by taking the difference we arrive at
\begin{equation}
\label{pol03}
d\log(y-y_1)-d\log(y-y_2)=0
\end{equation}
where 
$$d\log(y-y_i):=\dfrac{\delta(y-y_i)}{y-y_i},\quad i=1,\,2.$$
Equation \eqref{pol3} when specialized to the tuple $(y_1,y_2,y_3)$ of three distinct solutions to the equation \eqref{pol00} in place of $(y,y_1,y_3)$
gives
\begin{equation}
\label{jnkknknjgb}
d\log\left(\dfrac{y_1-y_2}{y_1-y_3}\right)=0.
\end{equation}
Assume $P_0\not=0$ then one may readily show that any three distinct solutions of \eqref{pol00} are also solutions to the second-order linear homogeneous differential equation given by, see \cite[exer. 1.14, p. 10]{singer2003},
\begin{equation}
\label{tolly}
u^{\prime\prime}-\left(P_1+\dfrac{\delta(P_0)}{P_0}\right)u^\prime+\left(P_1\left(\dfrac{\delta P_0}{P_0}\right)-\delta(P_1)\right)u=0.\end{equation}
Thus we may achieve
$$\dfrac{y_1-y_2}{y_1-y_3}\in C_K=\ker\delta$$
if choose them to lie in the Picard-Vessiot extension of that second-order linear homogeneous differential equation: any solution to \eqref{pol00} is also a solution to \eqref{tolly}.

 As a byproduct of that we see that in that case the transcendence degree over $K$ of the field $K(y_1,y_2,y_3)$ is at most $2$ that is: $\rm{tr}.\deg (K(y_1,y_2,y_3)|K)\leqslant2$.
\subsection{Case $n=2$: Riccati equations}
Let us now start our study of the next case after the case of \eqref{pol00}. This is the case of the Riccati equation
\begin{equation}
\label{pol04}
y^\prime=P_2y^2+P_{1}y+P_0,\quad (P_i)_{0\leqslant i\leqslant 2}\in K,\; P_2\not=0.
\end{equation}
Let $y$ be the general solution and $y_j$ a particular solution to \eqref{pol04} in a differential field extension of $K$. Then 
\begin{equation}
\label{pol05}
y^\prime-y_j^\prime=P_2(y^2-y_j^2)+P_1(y-y_j).
\end{equation}
Thus for $2$ distinct particular solutions $y_1$ and $y_2$ we deduce from \eqref{pol05}
\begin{equation}
\label{pol06}
d\log(y-y_1)-d\log(y-y_2)=P_2(y_1-y_2).
\end{equation}
One may deduce quickly from \eqref{pol06} the constancy of the cross-ratio of any $4$ distinct solutions to \eqref{pol04}. In fact let $y_3$, $y_4$ be two other distinct solutions to \eqref{pol04} in a differential field extension, distinct from $y_1$ and $y_2$. Then replacing successively $y$ by $y_3$, $y_4$ we find that
\begin{equation}
\begin{split}
\label{pol07}
d\log(y_3-y_1)-d\log(y_3-y_2)=P_2(y_1-y_2)\\
d\log(y_4-y_1)-d\log(y_4-y_2)=P_2(y_1-y_2).
\end{split}
\end{equation}
Taking the difference of the $2$ equations in \eqref{pol07} we find
\begin{equation}
\label{pol08}
d\log\left(\dfrac{(y_3-y_1)(y_4-y_2)}{(y_3-y_2)(y_4-y_1)}\right)=0,
\end{equation}
i.e. the constancy of the cross-ratio: $\frac{(y_3-y_1)(y_4-y_2)}{(y_3-y_2)(y_4-y_1)}\in\ker\delta$. Here $\delta$ represents now the unique extension of the original derivation $\delta$ to $K(y_1,y_2,y_3,y_4)$. But actually we may choose $y_1$, $y_2$, $y_3$, $y_4$ in a same Picard-Vessiot extension, the one associated to the order two linear homogeneous ode
\begin{equation}
\label{khadija}
u^{\prime\prime}-\left(P_1+\dfrac{\delta(P_2)}{P_2}\right)u^\prime+P_0P_2u=0.
\end{equation}
This is because there is a surjection between the set of non-zero solutions to the associated second order ode \eqref{khadija}, and the solutions to the Riccati equation given by $u\mapsto\frac{-u^\prime}{P_2u}$. Thus we may achieve $\frac{(y_3-y_1)(y_4-y_2)}{(y_3-y_2)(y_4-y_1)}\in\ker\delta=\mathrm C_K$. Hence in that case as an upshot we find in particular that the transcendence degree of the field $K(y_1,y_2,y_3,y_4)$ over $K$ is at most $3$: $\rm{tr}.\deg (K(y_1,y_2,y_3,y_4)|K)\leqslant3$.

Now from \eqref{pol06} with $y_1$, $y_2$ replaced by $y_2$, $y_3$ such that the set the three solutions $y_1$, $y_2$, $y_3$ belonging to a differential field extension of $K$ are all distinct, we derive similarly
\begin{equation}
\label{pol09}
d\log(y-y_2)-d\log(y-y_3)=P_2(y_2-y_3).
\end{equation}
Therefore we discover by taking the difference between equations \eqref{pol06} and \eqref{pol09}
\begin{equation}
\label{pol010}
\dfrac{1}{(y_1-y_2)}d\log\left(\dfrac{y-y_1}{y-y_2}\right)-\dfrac{1}{(y_2-y_3)}d\log\left(\dfrac{y-y_2}{y-y_3}\right)=0
\end{equation}
which gives after multiplication by $(y_1-y_2)(y_2-y_3)$
\begin{equation}
\label{pol011}
(y_2-y_3)d\log\left(\dfrac{y-y_1}{y-y_2}\right)-(y_1-y_2)d\log\left(\dfrac{y-y_2}{y-y_3}\right)=0,
\end{equation}
that is
\begin{equation}
\label{pol012}
(y_2-y_3)d\log(y-y_1)-(y_1-y_3)d\log(y-y_2)+(y_1-y_2)d\log(y-y_3)=0.
\end{equation}
This last formula generalizes \eqref{pol03}. If we eliminate the $(P_i)_{0\leqslant i\leqslant2}$ between the equations
\begin{equation}
\label{pol0012}
y_i^\prime=P_2y_i^2+P_1y_i+P_0
\end{equation} 
for distinct $(y_i)_{1\leqslant i\leqslant4}$, then we see that
\begin{equation}
\label{pol0013}
\left|\begin{array}{cccc}y_1^\prime & y_1^2 & y_1 & 1 \\y_2^\prime & y_2^2 & y_2 & 1 \\y_3^\prime & y_3^2 & y_3 & 1 \\y_4^\prime & y_4^2 & y_4 & 1\end{array}\right|=0,
\end{equation}
where $|\cdot|$ means the determinant of what is enclosed inside. The equation \eqref{pol0013} gives after computation
\begin{equation}
\label{pol0014}
(y_1-y_2)(y_3-y_4)\delta((y_1-y_4)(y_3-y_2))-(y_1-y_4)(y_3-y_2)\delta((y_1-y_2)(y_3-y_4))=0.
\end{equation}
That is 
\begin{equation}
\label{pol0015}
((y_1-y_4)(y_3-y_2))^2\delta\left(\dfrac{(y_1-y_2)(y_3-y_4)}{(y_1-y_4)(y_3-y_2)}\right)=0.
\end{equation}
Hence we discover again the constancy of the cross-ratio of four distinct solutions to \eqref{pol04}. Again as reminded before one may arrange so that $\frac{(y_3-y_1)(y_4-y_2)}{(y_3-y_2)(y_4-y_1)}\in\rm C_K$.
\subsection{Case $n=3$: Abel differential equation}
We consider a differential equation
\begin{equation}
\label{pol1}
y^\prime=P_3y^3+P_2y^2+P_1y+P_0,\quad(P_i)_{0\leqslant i\leqslant 3}\in K,\;P_3\not=0.
\end{equation}
Let $y_i$ be a particular solution of $\eqref{pol1}$ in some differential field extension of $K$ and $y$ be the general solution then
\begin{equation}
\label{pol2}
d\log(y-y_i):=\dfrac{\delta(y-y_i)}{y-y_i}=P_3(y^2+yy_i+y_i^2)+P_2(y+y_i)+P_1.
\end{equation}
From this we deduce
\begin{equation}
\label{pol3}
d\log(y-y_1)-d\log(y-y_2)=(y_1-y_2)(P_3(y+y_1+y_2)+P_2)
\end{equation}
where $y_1$, $y_2$ are distinct.

By using $y_1$ and $y_3$ instead of $y_1$ and $y_2$, such that all three $(y_i)_{1\leqslant i\leqslant3}$ are distinct, we obtain
\begin{equation}
\label{pol4}
d\log(y-y_1)-d\log(y-y_3)=(y_1-y_3)(P_3(y+y_1+y_3)+P_2).
\end{equation}
Hence
\begin{equation}
\label{pol013}
\dfrac{1}{(y_1-y_2)}d\log\left(\dfrac{y-y_1}{y-y_2}\right)-\dfrac{1}{y_1-y_3}d\log\left(\dfrac{y-y_1}{y-y_3}\right)=P_3(y_2-y_3).
\end{equation}
Thus after multiplying \eqref{pol013} by $(y_1-y_2)(y_1-y_3)$ we get
\begin{equation}
\label{pol5}
\omega_{13}d\log\left(\dfrac{y-y_1}{y-y_2}\right)-\omega_{12}d\log\left(\dfrac{y-y_1}{y-y_3}\right)=P_3\omega_{12}\omega_{13}\omega_{23},
\end{equation}
where $\omega_{ik}=y_i-y_k$. Let us choose $y=y_4$ then $y=y_5$ in \eqref{pol5} such that $(y_i)_{1\leqslant i\leqslant5}$ are distinct, then take the difference. This gives
\begin{equation}
\label{pol014}
\omega_{13}d\log\left(\dfrac{(y_4-y_1)(y_5-y_2)}{(y_4-y_2)(y_5-y_1)}\right)-\omega_{12}d\log\left(\dfrac{(y_4-y_1)(y_5-y_3)}{(y_4-y_3)(y_5-y_1)}\right)=0;
\end{equation}
this is a formula parallel to \eqref{pol08}.

If in \eqref{pol013} we consider the set $\{y_2,y_3,y_4\}$ of distinct solutions to \eqref{pol1} instead of $\{y_1,y_2,y_3\}$, where we assume that $(y_i)_{1\leqslant i\leqslant4}$ are all distinct, we find
\begin{equation}
\label{pol019}
\dfrac{1}{(y_2-y_3)}d\log\left(\dfrac{y-y_2}{y-y_3}\right)-\dfrac{1}{y_2-y_4}d\log\left(\dfrac{y-y_2}{y-y_4}\right)=P_3(y_3-y_4).
\end{equation}
After taking the difference between \eqref{pol013} and \eqref{pol019} (we divide equation \eqref{pol013}, resp. equation \eqref{pol019} by $(y_2-y_3)$, resp. $(y_3-y_4)$ before taking the difference of the resulting equations) we obtain
\begin{equation}
\label{pol200}
\begin{split}
\dfrac{1}{(y_2-y_3)}\left[\dfrac{1}{(y_1-y_2)}d\log\left(\dfrac{y-y_1}{y-y_2}\right)-\dfrac{1}{y_1-y_3}d\log\left(\dfrac{y-y_1}{y-y_3}\right)\right]\\-\dfrac{1}{(y_3-y_4)}\left[\dfrac{1}{(y_2-y_3)}d\log\left(\dfrac{y-y_2}{y-y_3}\right)-\dfrac{1}{y_2-y_4}d\log\left(\dfrac{y-y_2}{y-y_4}\right)\right]=0.
\end{split}
\end{equation}
After multiplication by $(y_1-y_2)(y_1-y_3)(y_2-y_3)(y_3-y_4)(y_2-y_4)$ equation \eqref{pol200} becomes 
\begin{equation}
\label{pol201}
\begin{split}
&\omega_{13}\omega_{34}\omega_{24}d\log\left(\dfrac{y-y_1}{y-y_2}\right)-\omega_{12}\omega_{34}\omega_{24}d\log\left(\dfrac{y-y_1}{y-y_3}\right)-\omega_{12}\omega_{13}\omega_{24}d\log\left(\dfrac{y-y_2}{y-y_3}\right)\\&+\omega_{12}\omega_{13}\omega_{23}d\log\left(\dfrac{y-y_2}{y-y_4}\right)=0
\end{split}
\end{equation} 
which after simplification translates into
\begin{equation}
\label{pol6}
\begin{split}
&\omega_{23}\omega_{24}\omega_{34}d\log(y-y_1)-\omega_{13}\omega_{14}\omega_{34}d\log(y-y_2)+\\
&\omega_{12}\omega_{14}\omega_{24}d\log(y-y_3)-\omega_{12}\omega_{13}\omega_{23}d\log(y-y_4)=0.
\end{split}
\end{equation}
This is a pendant to formula \eqref{pol012}.

Now after eliminating $P_3$, $P_2$, $P_1$, $P_0$ between the identities
\begin{equation}
\label{pol7}
y_i^\prime=P_3y^3_i+P_2y_i^2+P_1y_i+P_0,\quad i=1,2,3,4,5
\end{equation}
involving $5$ distinct solutions to \eqref{pol1} in a differential field extension of $K$, that is we consider the vanishing of the following determinant
\begin{equation}
\label{pol789}
\left|\begin{array}{ccccc}y_1^\prime & y_1^3 & y_1^2 & y_1 & 1 \\y_2^\prime & y_2^3 & y_2^2 & y_2 & 1\\y_3^\prime & y_3^3 & y_3^2 & y_3 & 1 \\y_4^\prime & y_4^3 & y_4^2 & y_4 & 1 \\y_5^\prime & y_5^3 & y_5^2 & y_5 & 1\end{array}\right|=0,
\end{equation}
we straightforwardly find that \eqref{pol789} is equivalent to the equation \eqref{pol8}
\begin{equation}
\label{pol8}
\omega_{14}\omega_{15}\omega_{24}\omega_{25}\omega_{45}\delta\left(\dfrac{\omega_{13}}{\omega_{12}}\right)+\omega_{13}\omega_{14}\omega_{23}\omega_{24}\omega_{34}\delta\left(\dfrac{\omega_{15}}{\omega_{12}}\right)+
\omega_{13}\omega_{15}\omega_{23}\omega_{25}\omega_{53}\delta\left(\dfrac{\omega_{14}}{\omega_{12}}\right)=0.
\end{equation}
This is the counterpart of the formula \eqref{pol0015}. In fact by using \eqref{pol019} twice where we replace the vector $(y,y_2,y_3,y_4)$ firstly by the vector $(y_1,y_2,y_3,y_4)$ then by the vector $(y_1,y_2,y_3,y_5)$ we obtain after having taken the difference of the gotten expressions the equality
\begin{equation}
\label{lhlhjklhknkklmjml}
\dfrac{\frac{\delta(y_1-y_2)}{(y_1-y_2)}-\frac{\delta(y_1-y_3)}{(y_1-y_3)}}{(y_2-y_3)(y_3-y_4)}-\dfrac{\frac{\delta(y_1-y_2)}{y_1-y_2}-\frac{\delta(y_1-y_4)}{(y_1-y_4)}}{(y_2-y_4)(y_3-y_4)}-\dfrac{\frac{\delta(y_1-y_2)}{y_1-y_2}-\frac{\delta(y_1-y_3)}{(y_1-y_3)}}{(y_2-y_3)(y_3-y_5)}+\dfrac{\frac{\delta(y_1-y_2)}{y_1-y_2}-\frac{\delta(y_1-y_5)}{(y_1-y_5)}}{(y_2-y_3)(y_3-y_5)}=0.
\end{equation}
After dividing the identity \eqref{lhlhjklhknkklmjml} by $\omega_{12}$ and multiplying it by minus the common denominator of the fractions involved, and then dividing the whole resulting identity by $(\omega_{12}^2)$ we get \eqref{pol8}. To obtain \eqref{pol789} from \eqref{lhlhjklhknkklmjml} it suffices to isolate each $\delta(y_i)$ in \eqref{lhlhjklhknkklmjml} and recognize that we find in this case the expansion of the left hand side of \eqref{pol789} with respect to its first column.
\subsection{The general case}
We treat mainly the case $n=4$ the general case shall be clear from the formulas for $n=4$. 
We consider a differential equation
\begin{equation}
\label{pol0101}
y^\prime=P_4y^4+P_3y^3+P_2y^2+P_1y+P_0,\quad(P_i)_{0\leqslant i\leqslant 4}\in K,\;P_4\not=0.
\end{equation}
Let $y_i$ be a particular solution of $\eqref{pol0101}$ in a differential field extension of $K$ then
\begin{equation}
\label{pol0102}
y^\prime-y^\prime_i=P_4(y^3+y^2y_i+yy_i^2+y_i^3)(y-y_i)+P_3(y^2+yy_i+y_i^2)(y-y_i)+ P_2(y+y_i)(y-y_i)+ P_1(y-y_i).
\end{equation}
Thus for two distinct solutions $y_1$, $y_2$ in a differential field extension of $K$ and for $y$ the general solution we get
\begin{equation}
\label{pol0103}
d\log\left(\dfrac{y-y_1}{y-y_2}\right)=(y_1-y_2)\left[P_4(y^2+y(y_1+y_2)+y_1^2+y_1y_2+y_2^2)+P_3(y+y_1+y_2)+P_2\right].
\end{equation}
This gives
\begin{equation}
\label{pol0104}
\dfrac{1}{(y_1-y_2)}d\log\left(\dfrac{y-y_1}{y-y_2}\right)=P_4(y^2+y(y_1+y_2)+y_1^2+y_1y_2+y_2^2)+P_3(y+y_1+y_2)+P_2.
\end{equation}
Applying the same reasoning with distinct $y_1$, $y_3$ such that $(y_i)_{1\leqslant i\leqslant3}$ are distinct as a whole we find
\begin{equation}
\label{pol0105}
\dfrac{1}{(y_1-y_3)}d\log\left(\dfrac{y-y_1}{y-y_3}\right)=P_4(y^2+y(y_1+y_3)+y_1^2+y_1y_3+y_3^2)+P_3(y+y_1+y_3)+P_2.
\end{equation}
If we take the difference between equations \eqref{pol0104} and \eqref{pol0105} we obtain
\begin{equation}
\label{pol0106}
\dfrac{1}{(y_1-y_2)}d\log\left(\dfrac{y-y_1}{y-y_2}\right)-\dfrac{1}{(y_1-y_3)}d\log\left(\dfrac{y-y_1}{y-y_3}\right)=(y_2-y_3)\left[P_4(y+y_1+y_2+y_3)+P_3\right].
\end{equation}
Hence
\begin{equation}
\label{pol0107}
\dfrac{1}{(y_2-y_3)}\left[\dfrac{1}{(y_1-y_2)}d\log\left(\dfrac{y-y_1}{y-y_2}\right)-\dfrac{1}{(y_1-y_3)}d\log\left(\dfrac{y-y_1}{y-y_3}\right)\right]=P_4(y+y_1+y_2+y_3)+P_3.
\end{equation}
Now if we do the same that we did for $(y_i)_{1\leqslant i\leqslant3}$ but now for $(y_i)$, with $i\in\{1,2,4\}$, such that $(y_i)_{1\leqslant i\leqslant4}$ are distinct as a whole we get the formula analogous to \eqref{pol0107}
\begin{equation}
\label{pol0108}
\dfrac{1}{(y_2-y_4)}\left[\dfrac{1}{(y_1-y_2)}d\log\left(\dfrac{y-y_1}{y-y_2}\right)-\dfrac{1}{(y_1-y_4)}d\log\left(\dfrac{y-y_1}{y-y_4}\right)\right]=P_4(y+y_1+y_2+y_4)+P_3.
\end{equation}
Then the difference between \eqref{pol0107} and \eqref{pol0108} gives
\begin{equation}
\label{pol0109}
\begin{split}
\dfrac{1}{(y_2-y_3)}\left[\dfrac{1}{(y_1-y_2)}d\log\left(\dfrac{y-y_1}{y-y_2}\right)-\dfrac{1}{(y_1-y_3)}d\log\left(\dfrac{y-y_1}{y-y_3}\right)\right]\\-\dfrac{1}{(y_2-y_4)}\left[\dfrac{1}{(y_1-y_2)}d\log\left(\dfrac{y-y_1}{y-y_2}\right)-\dfrac{1}{(y_1-y_4)}d\log\left(\dfrac{y-y_1}{y-y_4}\right)\right]=P_4(y_3-y_4).
\end{split}
\end{equation}
This gives a formula similar to \eqref{pol013}. Let us pause for a moment and find the formulas analogous to \eqref{pol5}, \eqref{pol014}. In order to achieve that we multiply \eqref{pol0109} by $(y_1-y_2)(y_1-y_3)(y_1-y_4)(y_2-y_3)(y_2-y_4)$ to find, with previous notations
\begin{equation}
\label{pol0110}
\begin{split}
&\omega_{13}\omega_{14}\omega_{34}d\log\left(\dfrac{y-y_1}{y-y_2}\right)-\omega_{12}\omega_{14}\omega_{24}d\log\left(\dfrac{y-y_1}{y-y_3}\right)+\omega_{12}\omega_{13}\omega_{23}d\log\left(\dfrac{y-y_1}{y-y_4}\right)\\&=P_4\omega_{12}\omega_{13}\omega_{14}\omega_{23}\omega_{24}\omega_{34}.
\end{split}
\end{equation}
This is similar to \eqref{pol5}. Let us choose solutions $y=y_5$, $y_6$ in a differential field extension of $K$ such that $(y_i)_{1\leqslant i\leqslant6}$ are distinct and replace them into \eqref{pol0110}. Then we get
\begin{equation}
\label{pol0111}
\begin{split}
&\omega_{13}\omega_{14}\omega_{24}d\log\left(\dfrac{y_5-y_1}{y_5-y_2}\right)-\omega_{12}\omega_{14}\omega_{24}d\log\left(\dfrac{y_5-y_1}{y_5-y_3}\right)-\omega_{13}\omega_{14}\omega_{23}d\log\left(\dfrac{y_5-y_1}{y_5-y_2}\right)\\&+\omega_{12}\omega_{13}\omega_{23}d\log\left(\dfrac{y_5-y_1}{y_5-y_4}\right)=P_4\omega_{12}\omega_{13}\omega_{14}\omega_{23}\omega_{24}\omega_{34}
\end{split}
\end{equation}
and
\begin{equation}
\label{pol0112}
\begin{split}
&\omega_{13}\omega_{14}\omega_{24}d\log\left(\dfrac{y_6-y_1}{y_6-y_2}\right)-\omega_{12}\omega_{14}\omega_{24}d\log\left(\dfrac{y_6-y_1}{y_6-y_3}\right)-\omega_{13}\omega_{14}\omega_{23}d\log\left(\dfrac{y_6-y_1}{y_6-y_2}\right)\\&+\omega_{12}\omega_{13}\omega_{23}d\log\left(\dfrac{y_6-y_1}{y_6-y_4}\right)=P_4\omega_{12}\omega_{13}\omega_{14}\omega_{23}\omega_{24}\omega_{34}.
\end{split}
\end{equation}
Hence after taking the difference of the two equations \eqref{pol0111} and \eqref{pol0112} we discover the identity
\begin{equation}
\label{pol0113}
\begin{split}
&\omega_{13}\omega_{14}\omega_{34}d\log\left(\dfrac{(y_5-y_1)(y_6-y_2)}{(y_5-y_2)(y_6-y_1)}\right)-\omega_{12}\omega_{14}\omega_{24}d\log\left(\dfrac{(y_5-y_1)(y_6-y_3)}{(y_5-y_3)(y_6-y_1)}\right)\\
&+\omega_{12}\omega_{13}\omega_{23}d\log\left(\dfrac{(y_5-y_1)(y_6-y_4)}{(y_5-y_4)(y_6-y_1)}\right)=0,
\end{split}
\end{equation}
a formula similar to \eqref{pol014}. Let us in \eqref{pol0109} use the set $\{y_2,y_3,y_4,y_5\}$ rather than the set $\{y_1,y_2,y_3,y_4\}$ where we assume that $(y_i)_{1\leqslant i\leqslant5}$ are distinct. Then we find
\begin{equation}
\label{pol010999}
\begin{split}
\dfrac{1}{(y_3-y_4)}\left[\dfrac{1}{(y_2-y_3)}d\log\left(\dfrac{y-y_2}{y-y_3}\right)-\dfrac{1}{(y_2-y_4)}d\log\left(\dfrac{y-y_2}{y-y_4}\right)\right]\\-\dfrac{1}{(y_3-y_5)}\left[\dfrac{1}{(y_2-y_3)}d\log\left(\dfrac{y-y_2}{y-y_3}\right)-\dfrac{1}{(y_2-y_5)}d\log\left(\dfrac{y-y_2}{y-y_5}\right)\right]=P_4(y_4-y_5).
\end{split}
\end{equation}
Now we divide equation \eqref{pol0109} by $(y_3-y_4)$ and equation \eqref{pol010999} by $(y_4-y_5)$ then take the difference of the gotten expressions and then after that multiply the latter difference by $(y_1-y_2)(y_1-y_3)(y_1-y_4)(y_2-y_3)(y_2-y_4)(y_2-y_5)(y_3-y_4)(y_3-y_5)(y_4-y_5)$ to obtain
\begin{equation}
\label{polpol01}
\begin{split}
&\omega_{23}\omega_{24}\omega_{25}\omega_{34}\omega_{35}\omega_{45}d\log(y-y_1)-\omega_{13}\omega_{14}\omega_{15}\omega_{34}\omega_{35}\omega_{45}d\log(y-y_2)\\&+\omega_{12}\omega_{14}\omega_{15}\omega_{24}\omega_{25}\omega_{45}d\log(y-y_3)-\omega_{12}\omega_{13}\omega_{15}\omega_{23}\omega_{25}\omega_{35}d\log(y-y_4)\\
&+\omega_{12}\omega_{13}\omega_{14}\omega_{23}\omega_{24}\omega_{34}d\log(y-y_5)=0,
\end{split}
\end{equation}
a formula similar to \eqref{pol6}. We also have the determinantal formula 
\begin{equation}
\label{polpolpol1}
\left|\begin{array}{cccccc}\delta(y_1) & y_1^4 & y_1^3 & y_1^2 & y_1&1 \\\delta(y_2) & y_2^4 & y_2^3 & y_2^2 & y_2&1 \\\delta(y_3) & y_3^4 & y_3^3 & y_3^2 & y_3&1 \\\delta(y_4) & y_4^4 & y_4^3 & y_4^2 & y_4&1 \\\delta(y_5) & y_5^4 & y_5^3 & y_5^2 & y_5&1\\\delta(y_6) & y_6^4 & y_6^3 & y_6^2 & y_6&1\end{array}\right|=0
\end{equation}
analogous to \eqref{pol8}, for $(y_i)_{1\leqslant i\leqslant 6}$ distinct fixed solutions in a differential field extension of $K$. More generally we have similarly as above
\begin{theorem}
\label{curious}
Consider the differential equation 
\begin{equation}
\label{pol101102}
y^\prime=P_ny^n+P_{n-1}y^{n-1}+\ldots+P_1y+P_0,\quad (P_i)_{0\leqslant i\leqslant n}\in K,\;n\geqslant2,\;P_n\not=0
\end{equation}
over the differential field $K$ of characteristic zero with $C_K=\ker\delta$ algebraically closed. Let $(y_i)_{1\leqslant i\leqslant n+2}$ be $(n+2)$ given distinct solutions of \eqref{pol101102} in a differential field extension of $K$ and $y$ be its general solution. Then the following formulas hold
\begin{equation}
\label{pol101103}
\displaystyle\sum_{i=2}^n(-1)^i\left(\prod_{\substack{1\leqslant k<l\leqslant n,\\k,\,l\;\not=i}}\omega_{kl}\right)d\log\left(\dfrac{y-y_1}{y-y_i}\right)=P_n\prod_{1\leqslant \alpha<\beta\leqslant n}\omega_{\alpha\beta},\quad\omega_{\alpha\beta}=y_\alpha-y_\beta,\;\omega_{kl}=y_k-y_l,
\end{equation}
\begin{equation}
\label{pol101104}
\displaystyle\sum_{i=2}^n(-1)^i\left(\prod_{\substack{1\leqslant k<l\leqslant n,\\k,\,l\;\not=i}}\omega_{kl}\right)d\log\left(\dfrac{(y_{n+1}-y_1)(y_{n+2}-y_i)}{(y_{n+1}-y_i)(y_{n+2}-y_1)}\right)=0,\quad\omega_{kl}=y_k-y_l,
\end{equation}
\begin{equation}
\label{pol101105}
\displaystyle\sum_{i=1}^{n+1}(-1)^{i-1}\left(\prod_{\substack{1\leqslant k<l\leqslant n,\\k,\,l\;\not=i}}\omega_{kl}\right)d\log\left(y-y_i\right)=0,\quad\omega_{kl}=y_k-y_l.
\end{equation}
Furthermore we have the following determinantal formula between any distinct fixed solutions $(y_i)_{1\leqslant i\leqslant n+2}$ in a differential field extension of $K$
\begin{equation}
\left|\begin{array}{ccccccc}\delta(y_1) & y^{n}_1 & y_1^{n-1} & \ldots & \ldots & y_1 & 1 \\\delta(y_2) & y^{n}_2 & y_2^{n-1} & \ldots & \ldots & y_2 & 1 \\\delta(y_3) & y^{n}_3 & y_3^{n-1} & \ldots & \ldots & y_3 & 1 \\\ldots & \ldots & \ldots & \ldots & \ldots & \ldots & \ldots \\\ldots & \ldots & \ldots & \ldots & \ldots & \ldots & \ldots \\ \delta(y_{n+1}) & y^{n}_{n+1} & y_{n+1}^{n-1} & \ldots & \ldots & y_{n+1} & 1  \\\delta(y_n) & y^{n}_{n+2} & y_{n+2}^{n-1} & \ldots & \ldots & y_{n+2} & 1 \end{array}\right|=0.
\end{equation}
\end{theorem}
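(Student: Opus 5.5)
The unifying tool is divided differences. Write $f(t)=P_nt^n+\dots+P_1t+P_0\in K[t]$. For any solution $z$ of \eqref{pol101102}, $\delta z=f(z)$. For distinct solutions $z_0,\dots,z_m$ define the divided differences $f[z_0,\dots,z_m]$ recursively as usual. Then $d\log(z_0-z_1)=f[z_0,z_1]$, and the recursion $f[z_0,\dots,z_m]=\bigl(f[z_0,\dots,z_{m-1}]-f[z_1,\dots,z_m]\bigr)/(z_0-z_m)$ is exactly the iterated subtraction-and-division performed in the cases $n=2,3,4$ above. I will use two facts about divided differences: the symmetric formula
\[
f[z_0,\dots,z_m]=\sum_{j=0}^m \frac{f(z_j)}{\prod_{k\neq j}(z_j-z_k)},
\]
and the fact that, since $\deg f=n$ with leading coefficient $P_n$, we have $f[z_0,\dots,z_n]=P_n$ and $f[z_0,\dots,z_{n+1}]=0$.

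For \eqref{pol101105}, I apply the vanishing of the $(n+1)$-st divided difference to the $n+2$ points $y,y_1,\dots,y_{n+1}$. Subtracting the identity for $y_1,\dots,y_{n+1}$ alone (whose $n$-th divided difference is also $P_n$) gives
\[
\sum_{i=1}^{n+1}\frac{f(y)-f(y_i)}{(y-y_i)\prod_{k\neq i}(y_i-y_k)}=0 .
\]
Each numerator over $(y-y_i)$ is $d\log(y-y_i)$. Multiplying by the Vandermonde $\prod_{k<l\le n+1}\omega_{kl}$ turns $1/\prod_{k\ne i}(y_i-y_k)$ into $(-1)^{i-1}\prod_{k<l,\;k,l\neq i}\omega_{kl}$, which is \eqref{pol101105}. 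I read the product index range in the statement as $\le n+1$, as in the examples \eqref{pol6} and \eqref{polpol01}.

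For \eqref{pol101103}, I use $f[y,y_1,\dots,y_{n-1}]-f[y_1,\dots,y_n]$-type manipulations. The cleanest route is to write $d\log\frac{y-y_1}{y-y_i}=(y_1-y_i)\,f[y,y_1,y_i]$ and recognise the left side, after dividing by the Vandermonde of $y_1,\dots,y_n$, as the symmetric expansion of the $(n-2)$-nd divided difference of the function $g(t)=f[y,y_1,t]$ at the nodes $y_2,\dots,y_n$. That divided difference equals $f[y,y_1,y_2,\dots,y_n]$, the $n$-th divided difference of $f$ over $n+1$ nodes, which is $P_n$. Checking that the signs $(-1)^i$ and the omitted-index products match the symmetric formula is the main bookkeeping obstacle. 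I would verify it against \eqref{pol0110} for $n=4$, and fix conventions there, in particular whether the product runs over indices $\geq 2$.

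Then \eqref{pol101104} follows by specialising $y=y_{n+1}$ and $y=y_{n+2}$ in \eqref{pol101103} and subtracting. The right side $P_n\prod\omega_{\alpha\beta}$ does not involve $y$, so it cancels, and $d\log$ is additive.

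Finally, the determinant identity is linear algebra. Its first column equals $\sum_j P_{n-j}$ times the column of $y_i^{\,n-j}$, so the first column is a $K$-linear combination of the remaining $n+1$ columns, and the determinant vanishes. (Equivalently, expanding it along the first column is, up to the Vandermonde factor, the statement $f[y_1,\dots,y_{n+2}]=0$.)
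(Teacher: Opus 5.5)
Your proof is correct and is the paper's own argument made systematic: the iterated subtract-and-divide steps the paper carries out for $n=2,3,4$ are exactly the divided-difference recursion. The symmetric formula, together with $f[z_0,\dots,z_n]=P_n$ and $f[z_0,\dots,z_{n+1}]=0$, supplies the general-$n$ case that the paper only declares clear; your readings of the upper index in \eqref{pol101105} as $n+1$ and of the last row of the determinant as $\delta(y_{n+2})$ correctly fix typos in the statement.

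The bookkeeping you deferred goes through with the product exactly as stated, including the index $1$: $\omega_{1i}\prod_{1\le k<l\le n,\,k,l\ne i}\omega_{kl}=\prod_{l=2}^{n}\omega_{1l}\cdot\prod_{2\le k<l\le n,\,k,l\ne i}\omega_{kl}$, and the leftover sign is $(-1)^{i-2}=(-1)^i$. For \eqref{pol101105} no subtraction is needed, and as phrased it would not produce your display: that sum is directly the divided difference of $t\mapsto f[y,t]$ at $y_1,\dots,y_{n+1}$, i.e.\ $f[y,y_1,\dots,y_{n+1}]=0$.
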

\begin{proof}
The proof is constructive and follows the steps explained above.
\end{proof}
\section{Obstruction to the constancy of cross-ratios of solutions of non-Riccati polynomial differential equations}
\label{obstructionto}
In this section we explain why when $n\geq3$, the cross-ratio of $4$ solutions to an equation \eqref{pol9} below is not constant in general. We achieve this by exhibiting an "obstruction" which prevents that from happening. 

We start by considering the differential equation
\begin{equation}
\label{pol9}
y^\prime=P_ny^n+P_{n-1}y^{n-1}+\ldots+P_1y+P_0,\quad (P_i)_{0\leqslant i\leqslant n}\in K,\; n\geqslant2.\end{equation}
Let $y_1$, $y_2$, $y_3$ be three distinct particular solutions to it. Then by linear algebra
\begin{equation}
\label{pol19}
\left|\begin{array}{ccc}\delta(y_1)-P_ny_1^n-\ldots-P_2y_1^2 & y_1 & 1 \\\delta(y_2)-P_ny_2^n-\ldots-P_2y_2^2 & y_2 & 1 \\\delta(y_3)-P_ny_3^n-\ldots-P_2y_3^2 & y_3 & 1\end{array}\right|=0. 
\end{equation}
We set for $s\geq2$
\begin{equation}
\label{pol20}
\left|\begin{array}{ccc}y_1^s & y_1 & 1 \\y_2^s & y_2 & 1 \\y_3^s & y_3 & 1\end{array}\right|=:\left|\begin{array}{ccc}y_1^2 & y_1 & 1 \\y_2^2 & y_2 & 1 \\y_3^2 & y_3 & 1\end{array}\right|A_{s-2}(y_1,y_2,y_3)
\end{equation}
where $A_{s-2}$ is a homogeneous symmetric polynomial function of degree $s-2$ of the $y_i$. In fact we have
\begin{equation}
\label{pol210000}
A_{s-2}=\displaystyle\sum_{\alpha\in\N^3,\;|\alpha|=s-2}y_1^{\alpha_1}y_2^{\alpha_2}y_3^{\alpha_3}.
\end{equation}
This follows from the theory of Schur polynomials, see \cite[§3, p. 40, eq. 3.1]{macdonald1995}. One may also express $A_{s-2}$ in terms of elementary symmetric functions, cf. \cite[formula 3.5, p. 41]{macdonald1995} where it is shown that
\begin{equation}
\label{pol21}
A_{s-2}=\left|\begin{array}{cccccc}c_1 & c_2 & c_3 & 0 & \ldots&0 \\1 & c_1 & c_2 & c_3 & \ldots&0 \\0 & 1 & c_1 & c_2 & \ldots&0\\\ldots& \ldots & \ldots & \ldots & \ldots&\ldots \\0 & 0 & 0 & 0 & \ldots&c_1\end{array}\right|
\end{equation}
where $c_1$ is the sum of the $(y_i)_{1\leqslant i\leqslant 3}$, $c_2$ is the sum of the products $2$ by $2$ of the $y_i$, i.e. $y_1y_2+y_1y_3+y_2y_3$, $c_3=y_1y_2y_3$. The equation \eqref{pol21} for $A_{s-2}$ is a determinant of order $s-2$. By developing the determinant \eqref{pol19} and by using \eqref{pol20} we have the formula
\begin{equation}
\label{pol00022}
\begin{split}
(y_3-y_1)^2\delta\left(\dfrac{y_2-y_1}{y_3-y_1}\right)&=(P_nA_{n-2}+P_{n-1}A_{n-3}+\ldots+P_2)V(y_1,y_2,y_3)\\
&=(P_nA_{n-2}+P_{n-1}A_{n-3}+\ldots+P_2)(y_2-y_1)(y_3-y_1)(y_2-y_3)
\end{split}
\end{equation}
where 
$$V(y_1,y_2,y_3)=\left|\begin{array}{ccc}y_1^2 & y_1 & 1 \\y_2^2 & y_2 & 1 \\y_3^2 & y_3 & 1\end{array}\right|.$$
This gives
\begin{equation}
\label{pol22}
d\log\dfrac{y_1-y_2}{y_1-y_3}=(P_nA_{n-2}+P_{n-1}A_{n-3}+\ldots+P_2)(y_2-y_3).
\end{equation}
For $n=2$, \eqref{pol22} exactly gives \eqref{pol09} when the latter is applied to $y_1$ which is distinct from $y_2$ and $y_3$.
Let us introduce $B_{s-3}$ for $s\geq3$ as follows
\begin{equation}
\label{pol00025}
\left|\begin{array}{cccc}y_1^s & y_1^2 & y_1 & 1 \\y_2^s & y_2^2 & y_2 & 1 \\y_3^s & y_3^2 & y_3 & 1 \\y_4^s & y_4^2 & y_4 & 1\end{array}\right|=:\left|\begin{array}{cccc}y_1^3 & y_1^2 & y_1 & 1 \\y_2^3 & y_2^2 & y_2 & 1 \\y_3^3 & y_3^2 & y_3 & 1 \\y_4^3 & y_4^2 & y_4 & 1\end{array}\right|B_{s-3}.
\end{equation}
These are the expressions analogous to the $A_{s-2}$, but which are constructed with the four particular distinct solutions $y_1$, $y_2$, $y_3$, $y_4$ instead of the three distinct solutions $y_1$, $y_2$, $y_3$. As before with $A_{s-2}$ we have by \cite[§3, p. 40, eq. 3.1]{macdonald1995}
\begin{equation}
\label{pol000251}
B_{s-3}=\displaystyle\sum_{\alpha\in\N^4,\;|\alpha|=s-3}y_1^{\alpha_1}y_2^{\alpha_2}y_3^{\alpha_3}y^{\alpha_4}_4.
\end{equation}
$B_{s-3}$ is a homogeneous symmetric polynomial in $(y_i)_{1\leq i\leq4}$ and of degree $s-3$. We set $$V(y_1,y_2,y_3,y_4)=\left|\begin{array}{cccc}y_1^3 & y_1^2 & y_1 & 1 \\y_2^3 & y_2^2 & y_2 & 1 \\y_3^3 & y_3^2 & y_3 & 1 \\y_4^3 & y_4^2 & y_4 & 1\end{array}\right|=(y_2-y_1)(y_3-y_1)(y_4-y_1)(y_3-y_2)(y_4-y_2)(y_4-y_3).$$
Then if instead of three particular distinct solutions of \eqref{pol9} we consider four particular distinct solutions to \eqref{pol9}, we have after eliminating $P_2$, $P_1$, $P_0$, instead of \eqref{pol19}, a $4$ by $4$ determinant
\begin{equation}
\label{pol23}
\left|\begin{array}{cccc}\delta(y_1)-P_ny_1^n-\ldots-P_3y_1^3&y_1^2& y_1 & 1 \\\delta(y_2)-P_ny_2^n-\ldots-P_3y_2^3&y_2^2 & y_2 & 1 \\\delta(y_3)-P_ny_3^n-\ldots-P_3y_3^3 &y_3^2& y_3 & 1\\\delta(y_4)-P_ny_4^n-\ldots-P_3y_4^3&y_4^2& y_4 & 1\end{array}\right|=0.
\end{equation}
Hence after developing the determinant \eqref{pol23} we obtain for $4$ given distinct solutions of \eqref{pol9} the next identity
\begin{equation}
\label{pol2400}
(y_1-y_4)^2(y_3-y_2)^2\delta\left(\dfrac{(y_1-y_2)(y_3-y_4)}{(y_1-y_4)(y_3-y_2)}\right)=(P_{n}B_{n-3}+P_{n-1}B_{n-4}+\ldots+P_3)V(y_1,y_2,y_3,y_4).
\end{equation}
This gives
\begin{equation}
\label{pol24}
d\log\lambda=d\log\left(\dfrac{(y_1-y_2)(y_3-y_4)}{(y_1-y_4)(y_3-y_2)}\right)=(P_{n}B_{n-3}+P_{n-1}B_{n-4}+\ldots+P_3)(y_4-y_2)(y_3-y_1)
\end{equation}
where 
\begin{equation}
\label{pol25}
\lambda:=\dfrac{(y_1-y_2)(y_3-y_4)}{(y_1-y_4)(y_3-y_2)}
\end{equation}
is the cross-ratio.

For the Riccati equation we have $n=2$ and the $P_i$ for which $i\geqslant3$ are all zero. Thus from \eqref{pol24} we rediscover the fact, \eqref{pol08},\eqref{pol0015} that the cross-ratio of any four distinct solutions to a Riccati equation is constant: lies in $\ker\delta$ and one may as explained before, arrange that it lies in $C_K$, by choosing the $(y_i)_{1\leq i\leq4}$ in a Picard-Vessiot extension of $K$.
\begin{theorem}
\label{obstruction}
The cross-ratio of four distinct solutions $(y_i)_{1\leqslant i\leqslant 4}$ of a differential equation 
$$y^\prime=P_ny^n+P_{n-1}y^{n-1}+\ldots+P_1y+P_0,\quad (P_i)_{0\leqslant i\leqslant n}\in K,\;n\geqslant3,$$
is in general not constant and it does not belong in general to the kernel of the natural derivation, still denoted by $\delta$, on $K(y_1,y_2,y_3,y_4)$ and which extends the given derivation $\delta$ on the base differential field $K$: $y_i^\prime=P_ny^n_i+P_{n-1}y^{n-1}_i+\ldots+P_1y_i+P_0$ for $i=1$, $2$, $3$, $4$. Furthermore the obstruction for the cross-ratio of the $y_i$ to lie in $\ker\delta$ is given by equation \eqref{pol24}
$$d\log\lambda=d\log\left(\dfrac{(y_1-y_2)(y_3-y_4)}{(y_1-y_4)(y_3-y_2)}\right)=(P_{n}B_{n-3}+P_{n-1}B_{n-4}+\ldots+P_3)(y_4-y_2)(y_3-y_1)$$
with 
$$\lambda:=\dfrac{(y_1-y_2)(y_3-y_4)}{(y_1-y_4)(y_3-y_2)}.$$
We also have the identity \eqref{pol22}
$$d\log\dfrac{y_1-y_2}{y_1-y_3}=(P_nA_{n-2}+P_{n-1}A_{n-3}+\ldots+P_2)(y_2-y_3)$$
which holds for any three given solutions to the differential equation
$$y^\prime=P_ny^n+P_{n-1}y^{n-1}+\ldots+P_1y+P_0,\quad (P_i)_{0\leqslant i\leqslant n}\in K,\;n\geqslant2.$$
Assume that the field of constants $C_K$ of $K$ is algebraically closed and consider four distinct solutions $(y_i)_{1\leqslant i\leqslant4}$ to 
$$y^\prime=P_ny^n+P_{n-1}y^{n-1}+\ldots+P_1y+P_0,\quad (P_i)_{0\leqslant i\leqslant n}\in K,\;n\geqslant3,\;P_n\not=0$$
which lie in a same differential field extension $L$ of $K$ with the same field of constants as $K$. Then if the $(y_i)_{1\leqslant i\leqslant4}$ are algebraically independent over $K$ then there does not exist $G_0\not=0$ and $G_1$, both algebraic over $K$ such that
\begin{equation}
\label{impossible}
d\log\lambda-d\log(G_0):=d\log\left(\dfrac{(y_1-y_2)(y_3-y_4)}{(y_1-y_4)(y_3-y_2)}\right)-d\log(G_0)=G_1^\prime
\end{equation}
with $\lambda$ as before.

Similarly let $K$ be as before with $C_K$ algebraically closed. Suppose that we are given three distinct solutions $(y_i)_{1\leqslant i\leqslant3}$ to 
$$y^\prime=P_ny^n+P_{n-1}y^{n-1}+\ldots+P_1y+P_0,\quad (P_i)_{0\leqslant i\leqslant n}\in K,\;n\geqslant2,\;P_n\not=0.$$
Then if the $(y_i)_{1\leqslant i\leqslant3}$ belong to a same differential field extension $L_1$ of $K$ with the same field of constants as $K$ and the $(y_i)_{1\leqslant i\leqslant3}$ are algebraically independent over $K$, then there does not exist $H_0\not=0$ and $H_1$ both algebraic over $K$ such that
\begin{equation}
\label{impossible1}
d\log\dfrac{y_1-y_2}{y_1-y_3}-d\log(H_0)=H_1^\prime.
\end{equation}
\end{theorem}
\begin{proof}
By what we have already explained only the two last assertions of the theorem remain to be proved. To see their truths under the hypotheses above we reason by contradiction and use \eqref{pol24}. Then given four solutions $(y_i)_{1\leqslant i\leqslant4}$ to 
$$y^\prime=P_ny^n+P_{n-1}y^{n-1}+\ldots+P_1y+P_0,\quad (P_i)_{0\leqslant i\leqslant n}\in K,\;n\geqslant3$$
which lie in a same differential field extension $L$ of $K$ with the same field of constants as $K$ and which are such that $(y_i)_{1\leqslant i\leqslant4}$ algebraically independent of $K$, then if we had the existence of $H_0$ and $H_1$ as above in \eqref{impossible}, then via \eqref{pol24} we would have a non-trivial polynomial algebraic relation between the $(y_i)_{1\leqslant i\leqslant4}$, by construction of the $B_{s-3}$, $3\leqslant s\leqslant n$. This would constitute a contradiction. A similar reasoning gives the last part of the theorem.
\end{proof}
\begin{remark}
The impossibility of an identity such as \eqref{impossible} should be read in plain text as there does not exist $G_0\not=0$ and $G_1$, both algebraic over $K$ such that
$$"\left(\dfrac{(y_1-y_2)(y_3-y_4)}{(y_1-y_4)(y_3-y_2)}\right)=G_0e^{G_1}."$$
Likewise the impossibility of an identity such as \eqref{impossible1} should be read in plain text as there does not exist $H_0\not=0$ and $H_1$, both algebraic over $K$ such that
$$"\dfrac{y_1-y_2}{y_1-y_3}=H_0e^{H_1}".$$
\end{remark}
\begin{remark}
We have seen that the cross-ratio of four distinct solutions to an equation 
$$y^\prime=P_ny^n+P_{n-1}y^{n-1}+\ldots+P_1y+P_0,\quad (P_i)_{0\leqslant i\leqslant n}\in K,\;n\geqslant3$$
is in general not constant. More dramatically for the innocent looking differential equation such as
$$y^\prime=y^3-y^2$$
over a characteristic zero algebraically closed field of constants $C$, it has been shown that any $m$ non-constant distinct solutions of it are algebraically independent over $C(x)$. In particular no cross-ratio of four distinct solutions of it can be an element of $C$. For more on this line of thought we refer to \cite[cor, p. 532-533]{rosenlicht}, \cite[lem. 7.2, prop. 7.3]{put2022}, \cite[§8, p. 160]{kolchin1974}.
\end{remark}
\section{Construction of a polynomial differential equation of Generalized Abel-Riccati type satisfied by the roots of a given separable polynomial}
\label{algebraic}
Let us consider now a separable polynomial
\begin{equation}
\label{pol26}
P_\varphi(z):=z^{n+1}+\varphi_1(x)z^n+\varphi_2(x)z^{n-1}+\ldots+\varphi_{n+1}(x),\;(\varphi_i)_{1\leqslant i\leqslant n+1}\in K
\end{equation}
where $(K,\frac{d}{dx})$ is some differential field of meromorphic functions on some domain of $\C$ with algebraically closed field of constants $C_K$. We want to construct a differential equation of the form \eqref{pol9} with coefficients in $K$ and satisfied by all the roots of \eqref{pol26}. Firstly remark that the field $L$ generated over $K$ by the different roots of $P_\varphi(z)$ is in a unique way a differential field and it has no new constants, i.e. $C_L=C_K$, since each root of $P_\varphi(z)$ is algebraic over $K$ and $C_K$ is algebraically closed (this follows from an iteration of \cite[Th. 6.2.6]{chambert2005}). Let $\xi_p(\varphi_1,\varphi_2,\ldots,\varphi_{n+1},x)$, $1\leqslant p\leqslant n+1$ be any root of \eqref{pol26} in a differential field extension of $K$, where we consider the $\varphi_i$ as variables. Then we have the relations
\begin{equation}
\label{pol27}
\sum_{j=1}^{n+1}\alpha_{ij}\dfrac{\partial \xi_p}{\partial \varphi_j}+\xi_p^i=0,\quad i=0,1,2,\ldots,n;\;p=1,2,\ldots,n+1
\end{equation}
where
\begin{equation}
\label{brioschi}
\alpha_{ij}=S_{i+j-1}+\varphi_1S_{i+j-2}+\ldots+\varphi_{j-1}S_i, \quad i=0,1,2,\ldots,n;\;j=1,2,\ldots,n+1,
\end{equation}
and where $S_k$ is the sum of the $k^{th}$ powers of the roots of \eqref{pol26}.
\begin{lemma}[Proof of \eqref{pol27}]
\label{lemrefAE}
Let $(K,\delta=\frac{d}{dx})$ be a differential field of meromorphic functions on some domain of $\C$ with algebraically closed field of constants $C_K=\C$. Let 
\begin{equation}
\label{pol2600000008800808}
P_\varphi(z)=z^{n+1}+\varphi_1(x)z^n+\varphi_2(x)z^{n-1}+\ldots+\varphi_{n+1}(x),\;(\varphi_i)_{1\leqslant i\leqslant n+1}\in K,\;n\geqslant2
\end{equation}
be a separable polynomial over $K$. We consider the coefficients $\varphi_1$, $\varphi_2$, $\ldots,$ $\varphi_{n+1}$ as independent variables. Then as such let $\xi_p(x,\varphi_1,\varphi_2,\ldots,\varphi_{n+1})$ be any root of \eqref{pol2600000008800808}. Then we have the following system of partial differential equations
\begin{equation}
\label{pol2908007}
\sum_{j=1}^{n+1}\alpha_{ij}\dfrac{\partial \xi_p}{\partial \varphi_j}+\xi_p^i=0,\quad i=0,1,2,\ldots,n;\;p=1,2,\ldots,n+1
\end{equation}
where
\begin{equation}
\label{brioschizeyz}
\alpha_{ij}=S_{i+j-1}+\varphi_1S_{i+j-2}+\ldots+\varphi_{j-1}S_i, \quad i=0,1,2,\ldots,n;\;j=1,2,\ldots,n+1,
\end{equation}
and where $S_k$ is the sum of the $k^{th}$ powers of the roots of \eqref{pol2600000008800808}.
\end{lemma}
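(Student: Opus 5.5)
The plan is to reduce the system \eqref{pol2908007} to a polynomial identity in one variable $z$, which is then evaluated at the root $\xi_p$.

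\emph{Step 1: the partial derivatives of a root.} Since $P_\varphi$ is separable, $P_\varphi'(\xi_p)\neq0$. The implicit function theorem then makes $\xi_p$ a holomorphic function of $(\varphi_1,\ldots,\varphi_{n+1})$ near any given point. Differentiating $P_\varphi(\xi_p)=0$ with respect to $\varphi_j$ gives
$$P_\varphi'(\xi_p)\,\dfrac{\partial\xi_p}{\partial\varphi_j}+\xi_p^{\,n+1-j}=0 .$$
Substituting into \eqref{pol2908007} and multiplying by $P_\varphi'(\xi_p)$, the claim becomes, for each $i=0,\ldots,n$,
$$\sum_{j=1}^{n+1}\alpha_{ij}\,\xi_p^{\,n+1-j}=\xi_p^{\,i}\,P_\varphi'(\xi_p).$$
Set $Q_i(z):=\sum_{j=1}^{n+1}\alpha_{ij}z^{n+1-j}$. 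It suffices to prove the congruence
$$z^iP_\varphi'(z)\equiv Q_i(z)\pmod{P_\varphi(z)}$$
in $K[z]$, or over the splitting field.

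\emph{Step 2: the generating identity.} Put $\varphi_0=1$. I use the expansion in powers of $1/z$,
$$\dfrac{P_\varphi'(z)}{P_\varphi(z)}=\sum_{q}\dfrac{1}{z-\xi_q}=\sum_{k\ge0}S_kz^{-k-1},$$
so that $z^iP_\varphi'(z)=P_\varphi(z)\sum_{k\ge0}S_kz^{i-k-1}$ exactly, as Laurent series in $1/z$. Since the left side is a polynomial, all negative powers on the right cancel. Collecting the coefficient of $z^{n+1-j}$ on the right gives $\sum_{m}\varphi_mS_{i+j-1-m}$, with $m$ running over $0\le m\le\min(n+1,i+j-1)$. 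Splitting this sum at $m=j-1$, the part with $m\le j-1$ is exactly $\alpha_{ij}$ by \eqref{brioschizeyz}. Hence $z^iP_\varphi'(z)=Q_i(z)+R_i(z)$, where $R_i$ collects the terms with $m\ge j$ together with the coefficients of the powers $z^{n+1},\ldots,z^{n+i}$ (formally $j\le0$).

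\emph{Step 3: identifying the remainder.} Reindex $R_i$ by $r=m-j$ and $k=i-1-r$, so that $0\le k\le i-1$. Each term is then $S_k\,\varphi_m z^{n+1-m}\,z^{i-1-k}$. The terms with $j\le0$ supply precisely the indices $m<r+1$ that are missing when $j$ is restricted to $j\ge1$. Summing over all $m=0,\ldots,n+1$ for fixed $k$ therefore reassembles $S_k z^{i-1-k}P_\varphi(z)$, giving
$$R_i(z)=\Bigl(\sum_{k=0}^{i-1}S_kz^{i-1-k}\Bigr)P_\varphi(z).$$
Evaluating at $z=\xi_p$ kills $R_i$ and yields the identity of Step 1, hence \eqref{pol2908007}.

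The main obstacle is the index bookkeeping in Step 3. One has to check that the high-degree terms and the terms with $m\ge j$ combine into exact multiples of $P_\varphi$ with no leftover boundary terms. I would verify the reindexing carefully, first on the cases $i=0$ (where $R_0=0$ and $Q_0=P_\varphi'$) and $i=1$, before writing the general bookkeeping.
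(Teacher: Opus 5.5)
Your argument is correct. Step 1 is exactly the paper's closing step: implicit differentiation of $P_\varphi(\xi_p)=0$, then division by $P_\varphi'(\xi_p)\neq 0$.

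Where you differ is in how you prove the key identity $\xi_p^iP_\varphi'(\xi_p)=\sum_{j}\alpha_{ij}\xi_p^{n+1-j}$.

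\textbf{The paper's route.} It works at a single root. It writes the synthetic-division quotient $z^iP_\varphi(z)/(z-\xi_p)$ explicitly, substitutes $z=\xi_1,\ldots,\xi_{n+1}$, and adds the results. On the left only $z=\xi_p$ contributes, giving $\xi_p^iP_\varphi'(\xi_p)$. On the right one gets power sums, which, regrouped by powers of $\xi_p$, have coefficients $\alpha_{ij}$.

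\textbf{Your route.} You prove the root-free identity $z^iP_\varphi'(z)=Q_i(z)+T_i(z)P_\varphi(z)$ in $K[z]$, where $T_i=\sum_{k=0}^{i-1}S_kz^{i-1-k}$. In other words, $Q_i$ is exactly the remainder of $z^iP_\varphi'$ on division by $P_\varphi$, and $T_i$ is the quotient. This is a stronger and more conceptual statement. It explains where Brioschi's coefficients come from, and it does not need separability at all; separability enters only when you divide by $P_\varphi'(\xi_p)$. The paper's route is shorter and avoids formal Laurent series.

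\textbf{Simplifying Step 3.} Your reindexing is correct but unnecessary. Split $\sum_{k\ge0}S_kz^{i-1-k}$ into $T_i$ and its principal part. Then
$z^iP_\varphi'-T_iP_\varphi=P_\varphi\sum_{k\ge i}S_kz^{i-1-k}$.
The left side shows this is a polynomial, and the right side shows it has degree at most $n$. Its coefficient of $z^{n+1-j}$ can be read off directly as $\sum_{m=0}^{j-1}\varphi_mS_{i+j-1-m}=\alpha_{ij}$, with $\varphi_0=1$. This gives $z^iP_\varphi'-T_iP_\varphi=Q_i$ with no boundary bookkeeping.
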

\begin{proof}
Let $\xi_1$ be any root of $P_\varphi(z)$. Then for any $i\in\{0,1,\ldots,n\}$ we have identically 
\begin{equation}
\label{LHGJKG}
z^i\dfrac{P_\varphi(z)}{(z-\xi_1)}=z^{i+n}+(\xi_1+\varphi_1)z^{i+n-1}+(\xi_1^2+\varphi_1\xi_1+\varphi_2)z^{i+n-2}+\ldots+(\xi_1^{n}+\varphi_1\xi_1^{n-1}+\ldots+\varphi_n)z^i.
\end{equation}
This is true because $\xi_1$ is a root of $P_\varphi(z)$ therefore the multiplication the quotient of the division of $P_\varphi(z)$ by $(z-\xi_1)$ by $z^i$ gives \eqref{LHGJKG}. 

Let us set in \eqref{LHGJKG} $z$ equal successively to the roots $\xi_1$, $\xi_2$, $\ldots,$ $\xi_{n+1}$ of $P_\varphi(z)$, and take the sum of resulting expressions to get
\begin{equation}
\label{oiuuhiuinibn}
\xi_1^iP^\prime_\varphi(\xi_1)=s_{i+n}+(\xi_1+\varphi_1)s_{i+n-1}+(\xi_1^2+\varphi_1\xi_1+\varphi_2)s_{i+n-2}+\ldots+(\xi_1^n+\varphi_1\xi_1^{n-1}+\ldots+\varphi_n)s_i.
\end{equation}
Since $\xi_1$ is arbitrarily chosen it also holds that
\begin{equation}
\label{lblbnnjklhh}
\xi_p^iP_\varphi^\prime(\xi_p)=\alpha_{i,1}\xi^n_p+\alpha_{i,2}\xi^{n-1}_p+\ldots+\alpha_{i,n+1},\;\;i\in\{0,1,\ldots,n\}
\end{equation}
with $\alpha_{i,j}$ given by \eqref{brioschizeyz}. Besides by taking the derivation of $P_\varphi(\xi_p)=0$ with respect to $\varphi_i$ we obtain
\begin{equation}
\label{mnlnhjkb}
\begin{split}
P_\varphi^\prime(\xi_p)\dfrac{\partial\xi_p}{\partial\varphi_1}+\xi_p^n=0, P_\varphi^\prime(\xi_p)\dfrac{\partial\xi_p}{\partial\varphi_2}+\xi_p^{n-1}=0,\ldots,P_\varphi^\prime(\xi_p)\dfrac{\partial\xi_p}{\partial\varphi_{n+1}}+1=0
\end{split}
\end{equation}
Let us multiply the first equation in \eqref{mnlnhjkb} by $\alpha_{i,1}$, resp. the second equation in \eqref{mnlnhjkb} by $\alpha_{i,2}$, $\ldots$, resp. the last equation in \eqref{mnlnhjkb} by $\alpha_{i,n+1}$ and take the sum of the resulting equations member by member. Then if we use \eqref{lblbnnjklhh} we find 
\begin{equation}
\label{ljnlllnunjln}
P_\varphi^\prime(\xi_p)\left(\sum_{j=1}^{n+1}\alpha_{ij}\dfrac{\partial \xi_p}{\partial \varphi_j}+\xi_p^i\right)=0,\quad i=0,1,2,\ldots,n;\;p=1,2,\ldots,n+1.
\end{equation}
But $P_\varphi^\prime(\xi_p)\not=0$ for $p=1,2,\ldots,n+1$, since $P_\varphi(z)$ is separable by hypothesis. Hence 
\begin{equation}
\label{ljnlllnunhfghhvjln}
\sum_{j=1}^{n+1}\alpha_{ij}\dfrac{\partial \xi_p}{\partial \varphi_j}+\xi_p^i=0,\quad i=0,1,2,\ldots,n;\;p=1,2,\ldots,n+1.
\end{equation}
This proves our claim.
\end{proof}
\begin{remark}
As stated it may happen that $P_n\equiv0$ if the $\varphi_i$ are not well chosen but it is easy to arrange so that $P_n\not=0$, by requiring the non-vanishing of a certain determinant in the $\varphi_i$ and their derivatives. Furthermore linear pdes satisfied by roots of polynomials have been previously studied by Mayr, \cite{mayr} and in the theory of GKZ systems, \cite{sturmfels2000}. In fact one may show that the roots of a separable polynomial
$$b_{0}z^n+b_1z^{n-1}+b_2z^{n-2}+\ldots+b_n$$
satisfy a GKZ hypergeometric system of pdes with respect to the coefficients $b_i$ whose solution space is finite dimensional over $\C$, cf. \cite{mayr, sturmfels2000}.
\end{remark}

Before going further let us remind that each $S_k$ belongs to $\C\left[\varphi_1,\varphi_2,\ldots,\varphi_{n+1}\right]$ by the Newton identities. Also remember that we are looking for a polynomial differential equation of the form \eqref{pol9} that $\xi_p$ may satisfy. Thus we derive $\xi_p=\xi_p(x,\varphi_1,\varphi_2,\ldots,\varphi_n)$ with respect to $x$ and obtain the identical relation
\begin{equation}
\label{pol28}
\sum_{j=1}^{n+1}\varphi^\prime_j(x)\dfrac{\partial \xi_p}{\partial \varphi_j}-\dfrac{d\xi_p}{dx}=0,
\end{equation}
with $\varphi^\prime_j=\frac{d\varphi_j}{dx}$. If we eliminate the partial derivatives of $\xi_p$ with respect to $\varphi_j$, between \eqref{pol27} and \eqref{pol28}, we obtain
\begin{equation}
\label{pol29}
\left|\begin{array}{cccc}\frac{d\xi_p}{dx} & \varphi_1^\prime(x) & \ldots & \varphi_{n+1}^\prime(x) \\-1 & \alpha_{0,1} & \ldots & \alpha_{0,n+1} \\-\xi_p & \alpha_{1,1} & \ldots & \alpha_{1,n+1} \\\ldots &\ldots & \ldots & \ldots\\-\xi_p^n & \alpha_{n,1} & \ldots & \alpha_{n,n+1}\end{array}\right|=0.\end{equation}
The cofactor associated to $\frac{d\xi_p}{dx}$ is the discriminant of the polynomial \eqref{pol26} which is non zero since $P_\varphi(z)$ is separable. In fact let us recall the following relation between the Vandermonde matrix on the roots of \eqref{pol26}, its transpose, the Hankel matrix of the sum of powers of the roots. This will allow us to show that the discriminant of \eqref{pol26} is equal to the cofactor of $\frac{d\xi_p}{dx}$. We have
\begin{equation}
\label{oopkklhjjkjkjklhj}
\left(\begin{array}{ccccc}1 & 1 & \ldots & \ldots & 1 \\\xi_1 & \xi_2 & \ldots & \ldots & \xi_{n+1} \\\xi_1^2 & \xi_2^2 & \ldots & \ldots & \xi_{n+1}^2 \\\ldots & \ldots & \ldots & \ldots & \ldots \\\xi_1^{n} & \xi_2^n & \ldots & \ldots & \xi_{n+1}^n\end{array}\right)\left(\begin{array}{ccccc}1 & \xi_1 & \xi_1^2 & \ldots & \xi_1^n \\1 & \xi_2 & \xi_2^2 & \ldots & \xi_2^n \\\vdots & \vdots & \vdots & \vdots & \vdots \\\vdots & \vdots & \vdots & \vdots & \vdots \\1 & \xi_{n+1} & \xi_{n+1}^2 & \ldots & \xi_{n+1}^n\end{array}\right)=\left(\begin{array}{ccccc}S_0 & S_1 & S_2 & \ldots & S_n \\S_1 & S_2 & S_3 & \ldots & S_{n+1} \\\vdots & \vdots & \vdots & \vdots& \vdots \\S_{n-1} & S_n & S_{n+1} & \ldots & S_{2n-1} \\S_{n} & S_{n+1} & S_{n+2} & \ldots & S_{2n}\end{array}\right).
\end{equation}
Now on the one hand the square of the determinant of the Vandermonde formed from the $(n+1)$ roots of \eqref{pol26} is exactly the discriminant of \eqref{pol26} while on the other hand one readily checks using the formulas \eqref{brioschi} for the $\alpha_{i,j}$ that the determinant of the matrix on the right hand side of \eqref{oopkklhjjkjkjklhj} is exactly equal to the cofactor of $\frac{d\xi_p}{dx}$ in \eqref{pol29}. Hence our claim about this cofactor and our deduction that it is different from zero. Therefore since $\xi_p$ is arbitrary all the roots of the equation \eqref{pol26} are also solution to the differential equation
\begin{equation}
\label{pol2900}
\left|\begin{array}{cccc}\frac{dy}{dx} & \varphi_1^\prime(x) & \ldots & \varphi_{n+1}^\prime(x) \\-1 & \alpha_{0,1} & \ldots & \alpha_{0,n+1} \\-y & \alpha_{1,1} & \ldots & \alpha_{1,n+1} \\\ldots &\ldots & \ldots & \ldots\\-y^n & \alpha_{n,1} & \ldots & \alpha_{n,n+1}\end{array}\right|=0\end{equation}
which we may also write in the form \eqref{pol9}
$$y^\prime=P_ny^n+P_{n-1}y^{n-1}+\ldots+P_1y+P_0,\;\;\;P_j\in K,\;^\prime=\frac{d}{dx}.$$
Hence we have the
\begin{theorem}
\label{brm}
Let $(K,\delta=\frac{d}{dx})$ be a differential field of meromorphic functions on some domain of $\C$ with algebraically closed field of constants $C_K=\C$. Let 
\begin{equation}
\label{pol260000000}
P_\varphi(z)=z^{n+1}+\varphi_1(x)z^n+\varphi_2(x)z^{n-1}+\ldots+\varphi_{n+1}(x),\;(\varphi_i)_{1\leqslant i\leqslant n+1}\in K,\;n\geqslant2
\end{equation}
be a separable polynomial over $K$. Then there exists a differential of the form
\begin{equation}
\label{pol98976797}
y^\prime=P_ny^n+P_{n-1}y^{n-1}+\ldots+P_1y+P_0,\quad (P_i)_{0\leqslant i\leqslant n}\in K\end{equation}
which is satisfied by all the roots of $P_\varphi(z)$. Moreover if there exists another such differential equation of the same form
\begin{equation}
\label{gghgkhgkjhnghjg}
y^\prime=Q_ny^n+Q_{n-1}y^{n-1}+\ldots+Q_1y+Q_0,\quad (Q_i)_{0\leqslant i\leqslant n}\in K,
\end{equation}
which is satisfied by all the roots of $P_\varphi(z)$ then the two differential equation \eqref{pol260000000}, \eqref{pol98976797} are identical. In other words the differential equation \eqref{pol98976797} is the unique differential equation of the form 
$$y^\prime=R_ny^n+R_{n-1}y^{n-1}+\ldots+R_1y+R_0,\quad (R_i)_{0\leqslant i\leqslant n}\in K$$
which is satisfied by all the roots of $P_\varphi(z)$.
\end{theorem}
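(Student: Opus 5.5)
Existence is essentially already established by the discussion preceding the statement, so the plan is to organize it and then add a short linear-algebra argument for uniqueness.

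For existence, take a root $\xi_p$ of $P_\varphi(z)$ and view it as a function of $x$ and of the $\varphi_j$. \lemref{lemrefAE} supplies the $n+1$ linear relations \eqref{pol27} among the partial derivatives $\partial\xi_p/\partial\varphi_j$. The chain rule supplies one more relation, \eqref{pol28}. Altogether this is a homogeneous linear system of $n+2$ equations in the $n+2$ quantities $(1,\partial\xi_p/\partial\varphi_1,\ldots,\partial\xi_p/\partial\varphi_{n+1})$. Since this vector is nonzero, the determinant \eqref{pol29} vanishes. I will then expand \eqref{pol29} along its first column. The cofactor of $d\xi_p/dx$ is $\det(\alpha_{ij})$, and via the factorization \eqref{oopkklhjjkjkjklhj} it equals the determinant of the Hankel matrix $(S_{i+j})$, which is the discriminant. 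Checking this identification is the one computational point. I would do it by column operations: subtracting $\varphi_{k}$ times column $j-k$ from column $j$ reduces $\alpha_{ij}$ to $S_{i+j-1}$, with the columns reindexed. The remaining cofactors are polynomials in the $S_k$, the $\varphi_j$ and the $\varphi_j'$, hence lie in $K$ by Newton's identities. Dividing by the discriminant, which is nonzero by separability, gives coefficients $P_0,\ldots,P_n\in K$ that do not depend on $p$. So every root satisfies \eqref{pol98976797}.

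For uniqueness, suppose both \eqref{pol98976797} and \eqref{gghgkhgkjhnghjg} are satisfied by all $n+1$ roots $\xi_1,\ldots,\xi_{n+1}$. Subtracting the two equations gives
\[
\sum_{k=0}^n (P_k-Q_k)\,\xi_p^k=0,\qquad p=1,\ldots,n+1 .
\]
This says that the polynomial $\sum_k (P_k-Q_k)z^k$, of degree at most $n$, has $n+1$ distinct roots, since the $\xi_p$ are distinct by separability. Hence it is identically zero. Equivalently, the Vandermonde matrix of the $\xi_p$ is invertible, so $P_k=Q_k$ for all $k$ and the two equations coincide. (The statement's mention of \eqref{pol260000000} is evidently a typo for \eqref{gghgkhgkjhnghjg}.)

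The main obstacle I expect is the cofactor/discriminant identification, since everything else is formal. If the column-reduction bookkeeping becomes awkward, I would fall back on a more direct route. By \eqref{lblbnnjklhh}, the matrix $(\alpha_{ij})$ equals $V^{T}\operatorname{diag}(P'_\varphi(\xi_p))W$, where $V$ is the Vandermonde matrix in the $\xi_p$ and $W$ is the Vandermonde matrix with reversed column order. Its determinant is then $\pm\prod_p P_\varphi'(\xi_p)\cdot(\det V)^2$, which is nonzero because $P_\varphi$ is separable. This nonvanishing is all the existence argument actually needs.
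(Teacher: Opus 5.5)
Your proposal is correct and follows the paper's own route. You eliminate the $\partial\xi_p/\partial\varphi_j$ between \eqref{pol27} and \eqref{pol28}, identify the cofactor of $d\xi_p/dx$ in \eqref{pol29} with the Hankel determinant $\det(S_{i+j})=(\det V)^2$ by a unitriangular column reduction (done left to right using the already reduced columns, which makes the paper's ``one readily checks'' precise), and get uniqueness from a polynomial of degree at most $n$ with $n+1$ distinct roots.

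The only slip is in your fallback, which you do not need. Equation \eqref{lblbnnjklhh} actually says $(\alpha_{ij})\,W^{T}=V^{T}\operatorname{diag}\bigl(P'_\varphi(\xi_p)\bigr)$, so $W$ enters inverted; your version would give a determinant of degree $2n(n+1)$ in the roots instead of $n(n+1)$. The correct conclusion is $\det(\alpha_{ij})=(-1)^{n(n+1)/2}\prod_pP'_\varphi(\xi_p)=(\det V)^2\neq0$.
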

\begin{proof}
We have already shown constructively how to obtain a differential equation of the form \eqref{pol98976797} starting from the datum of $P_\varphi(z)$, \eqref{pol260000000}. Let us see why $P_\varphi(z)$ must be unique. Suppose all the roots of $P_\varphi(z)$ satisfy both equations \eqref{pol260000000}, \eqref{pol98976797} . Then for $\xi$ an arbitrary such root we have
\begin{equation}
\label{GHKLGHHG}
\begin{split}
\dfrac{d\xi}{dx}=P_n\xi^n+P_{n-1}\xi^{n-1}+\ldots+P_1\xi+P_0=Q_n\xi^n+Q_{n-1}\xi^{n-1}+\ldots+Q_1\xi+Q_0.
\end{split}
\end{equation}
Hence the polynomial 
$$(P_n-Q_n)z^n+(P_{n-1}-Q_{n-1})z^{n-1}+\ldots+(P_0-Q_0)$$
admits $n+1$ distinct roots, and must be the zero polynomial. Thus 
$$P_0=Q_0,\;P_1=Q_1,\ldots,P_n=Q_n.$$
\end{proof}
\begin{example}
\label{onetoone}
Let us look at an example, the case $n=3$. In this case the cross-ratio $\lambda$ of the four roots of the separable degree four polynomial
\begin{equation}
\label{pol30}
P_\varphi(z)=z^{4}+\varphi_1(x)z^3+\varphi_2(x)z^{2}+\varphi_3(x)z+\varphi_{4}(x),\;(\varphi_i)_{1\leqslant i\leqslant 4}\in K
\end{equation}
satisfies the relation
\begin{equation}
\label{pol31}
d\log\lambda=P_3(y_1-y_2)(y_3-y_4),
\end{equation}
with
\begin{equation*}
\label{jibreel}
P_3=\dfrac{1}{\mathscr D}\left|\begin{array}{cccc}\varphi_1^\prime & \varphi_2^\prime & \varphi_3^\prime & \varphi_4^\prime \\4 & 3\varphi_1 & 2\varphi_2 & \varphi_3 \\-\varphi_1 & -2\varphi_2 & -3\varphi_3 & -4\varphi_4 \\\varphi_1^2-2\varphi_2 & \varphi_1\varphi_2-3\varphi_3 & \varphi_1\varphi_3-4\varphi_4 & \varphi_1\varphi_4\end{array}\right|,
\end{equation*}
where $\mathscr D$ is the discriminant of \eqref{pol30}. In particular if $P_3=0$ we then find $\dfrac{d}{dx}\left(\lambda\right)=0$. More precisely the differential equation associated to \eqref{pol30} is given by the determinant
\begin{equation}
\label{pol90876}
\left|\begin{array}{ccccc}\frac{dy}{dx} & \varphi_1^\prime &  \varphi_2^\prime &  \varphi_3^\prime & \varphi_4^\prime\\-1 & 4 & 3\varphi_1 & 2\varphi_2 & \varphi_3 \\-y & -\varphi_1 & -2\varphi_2 & -3\varphi_3 & -4\varphi_4  \\-y^2 & \varphi_1^2-2\varphi_2 & \varphi_1\varphi_2-3\varphi_3 & \varphi_1\varphi_3-4\varphi_4 & \varphi_1\varphi_4  \\-y^3 & A & B & C & D \end{array}\right|=0
\end{equation}
with 
\begin{equation}
\label{pol346657577}
A=-\varphi_1^3+3\varphi_1\varphi_2-3\varphi_3;\;B=-\varphi_1^2\varphi_2+2\varphi_2^2+\varphi_1\varphi_3-4\varphi_4;\;C=-\varphi_1^2\varphi_3+2\varphi_2\varphi_3+\varphi_1\varphi_4;\;D=-\varphi_1^2\varphi_4+2\varphi_2\varphi_4.
\end{equation}
The above expressions of the $\alpha_{i,j}$ in terms of the $\varphi_i$ are obtained from their definition \eqref{brioschi} and the Girard-Newton formulas.
\end{example}
\begin{example}
\label{onetotwo}
We analyze another example with $n=2$ that is the case of a separable polynomial
\begin{equation}
\label{pol30LLKK1}
P_\varphi(z)=z^{3}+\varphi_1(x)z^2+\varphi_2(x)z+\varphi_3(x),\;(\varphi_i)_{1\leqslant i\leqslant 3}\in K.
\end{equation}
Then we find that the associated Riccati equation is given by
\begin{equation}
\label{pol908JLHJKLLKJ76}
\left|\begin{array}{cccc}\frac{dy}{dx} & \varphi_1^\prime &  \varphi_2^\prime &  \varphi_3^\prime\\-1 & 3 & 2\varphi_1 & \varphi_2  \\-y & -\varphi_1 & -2\varphi_2 & -3\varphi_3 \\-y^2 & \varphi_1^2-2\varphi_2 & \varphi_1\varphi_2-3\varphi_3 &  \varphi_1\varphi_3  \end{array}\right|=0.
\end{equation}
Let us introduce a restricted type of solutions of a first order differential polynomial $G(y,y^\prime)\in K\left[y,y^\prime\right]\setminus K$ by calling $t$ a solution of $G(y,y^\prime)$ if $t$ lies in a differential field extension $L $ of $K$ with $C_L=C_K$ (same field of constants) and $G(t,t^\prime)=0$, \cite[p. 48]{put2015}. Every of our Riccati equations in this example possesses three distinct algebraic solutions of this restricted type by \cite[Th. 6.2.6]{chambert2005}, since $C_K$ is algebraically closed. Hence by constancy of the cross-ratio of any four given distinct solutions to a Riccati equation, every restricted solution to the Riccati equation \eqref{pol908JLHJKLLKJ76} in the Picard-Vessiot extension over $K$ of the associated linear homogenous second order differential equation, cf. \eqref{khadija}, is algebraic over $K$. Actually it follows from the discussion of \cite[§7.2]{roy2024} that any restricted solution to our Riccati equation lies in the Picard-Vessiot extension over $K$ of its associated linear homogeneous second differentiaI equation. Thus any restricted solution to such a Riccati equation is algebraic over $K$ by constancy of the cross-ratio. Thus by \cite[Th. 2.8]{put2015} the said Riccati equation \eqref{pol908JLHJKLLKJ76} admits a first integral in its function field $K(y)$: a non-trivial constant of the natural derivation associated to \eqref{pol908JLHJKLLKJ76}. 

Let us now suppose that $\varphi_1=0$ then we have from \eqref{pol908JLHJKLLKJ76} the Riccati equation
\begin{equation}
\label{ricdtr}
(27\varphi_2^2+4\varphi_3^3)\dfrac{dy}{dx}+3(2\varphi_3\varphi_2^\prime-3\varphi_2\varphi_3^\prime)y^2-\dfrac{1}{6}\dfrac{d}{dx}(27\varphi_2^2+4\varphi_3^3)y+2(2\varphi_3\varphi_2^\prime-3\varphi_2\varphi_3^\prime)\varphi_3=0.
\end{equation}
Let us further set $\varphi_2=-12$, $\varphi_3=-8x$. Then the Riccati differential equation takes after the suppression of a common factor $64\times9$, the form
\begin{equation}
\label{derivation}
3(x^2-4)\dfrac{dy}{dx}+y^2-xy-8=0.
\end{equation}
Furthermore the associated polynomial is given by
$$P(z)=z^3-12z-8x.$$
Let now $X:=3(x^2-4)\partial_x+(8+xy-y^2)\partial_y$ be the derivation associated to \eqref{derivation}. Let us further introduce
$$Q=xy^3+12y^2+12xy+8(x^2-2)$$
and
$$\Delta=y^2+2xy+4.$$
Then one readily checks that
$$X(Q)=3(2x-y)Q,\;\;\;X(\Delta)=2(2x-y)\Delta.$$
Thus 
$$X\left(\dfrac{Q^2}{\Delta^3}\right)=0$$
and hence $\frac{Q^2}{\Delta^3}$ is a rational first integral of the Riccati equation \eqref{derivation}. This confirms the previous prediction.
\end{example}
\begin{remark}
Given a separable polynomial $P_\varphi(z)$ with coefficients in $K$ of the form \eqref{pol26} we have exhibited a uniquely defined differential equation of the form \eqref{pol98976797} such that every root of such a separable polynomial is a solution to that differential equation. Assume $n\geq3$ and in the notation above that $P_n\not=0$. Then it is interesting to investigate whether our differential equation \eqref{GHKLGHHG} has a Liouvillian transcendental solution in a no new constant differential field extension $L$ of $K$, see \cite[p. 600]{roy2023} for the definition. Then according to \cite[Th. 3.2]{roy2023} our differential equation \eqref{GHKLGHHG} has at least $n$ solutions algebraic of $K$ and its number of algebraic solutions is finite, in case it admits a transcendental Liouvillian solution (solution in the restricted sense: in a differential field extension of with the same field of constants). So potentially our differential equation \eqref{GHKLGHHG} may admit such a Liouvillian transcendental solution since the number of its algebraic over $K$ solutions is at least $(n+1)$. More generally \cite[Th. 3.3]{roy2023} gives precisely when such a transcendental Liouvillian solution should exist. Moreover they express when the number of algebraic over $K$, solutions to a differential equation of the form
$$y^\prime=P_ny^n+P_{n-1}y^{n-1}+\ldots+P_1y+P_0,\quad (P_i)_{0\leqslant i\leqslant n}\in K, \;P_n\not=0$$
is precisely $n$, and the differential equation has a transcendental Liouvillian solution, that differential equation in terms of its $n$ algebraic solutions. Our constructions are different and do not intersect since in case of intersection our differential equation \eqref{GHKLGHHG} would have to be a degree $(n+1)$ polynomial in $y$ on its right hand side, which it is not. Finally in the construction of \cite[prop. 3.6]{roy2023} the leading coefficient $P_n$ must be non zero. 
\end{remark}

\end{document}